\numberwithin{equation}{section}
\newcommand{\bb}{\bm{b}}
\newcommand{\Qq}{\mathbb{Q}}
\newcommand{\Rr}{\mathbb{R}}
\newcommand{\Nn}{\mathbb{N}}
\newcommand{\Span}{\operatorname{Span}}
\newcommand{\mld}{{\rm{mld}}}
\newcommand{\tmld}{{\rm{tmld}}}
\newcommand{\codim}{{\rm{codim}}}
\newcommand{\lct}{\operatorname{lct}}
\newcommand{\LCT}{\operatorname{LCT}}
\newcommand{\Supp}{\operatorname{Supp}}
\newcommand{\Diff}{\operatorname{Diff}}
\newcommand{\mult}{\operatorname{mult}}
\newcommand{\Ii}{\mathcal{I}}
\newcommand\MLD{{\rm{MLD}}}
\newtheorem{thm}{Theorem}[section]
\newtheorem{conj}[thm]{Conjecture}
\newtheorem{cor}[thm]{Corollary}
\newtheorem{lem}[thm]{Lemma}
\newtheorem{defn}[thm]{Definition}
\newtheorem{prop}[thm]{Proposition}
\newtheorem{ques}[thm]{Question}
\newtheorem{claim}[thm]{Claim}
\theoremstyle{definition}
\newtheorem{rem}[thm]{Remark}
\newtheorem{ex}[thm]{Example}
\theoremstyle{definition}
\begin{document}

\title{Towards the equivalence of the ACC for $a$-log canonical thresholds and the ACC for minimal log discrepancies}

%\date{\today}
%\setcounter{footnote}{-1}
%Boundedness of Index of Exceptional Singularities}
% Dedicated to Gang Tian on the occasion of this sixtieth birthday ’s Sixtieth Birthday with admirationyachesla

\author{Jihao Liu}

%\author{Jihao Liu}
\address{Department of Mathematics, The University of Uath, Salt Lake City, UT 84112, USA}
\email{jliu@math.utah.edu}

\begin{abstract}
	In this paper, we show that Shokurov's conjectures on the ACC for $a$-lc thresholds and the ACC for minimal log discrepancies are equivalent in the interval $[0,1)$. That is, the conjecture on ACC for $a$-lc thresholds holds for every $0\leq a<1$ if and only if the set of minimal log discrepancies for pairs with DCC coefficients do not have an accumulation point from below which belongs to $[0,1)$.
\end{abstract}
\date{\today}
%\author{JINGJUN HAN, JIHAO LIU, AND VYACHESLAV VLADIMIROVICH SHOKUROV \footnote{\textit{Date}: September 13th, 2018.}}In this paper, we analyze the algebraic properties for singularities that admits an $\epsilon$-plt blow-up for some fixed $\epsilon>0$, and in particular, exceptional singularities. For any $\epsilon_0>0$, we show that the index of any $\epsilon_0$-lc exceptional singularity with fixed dimension is finite. As corollaries, we prove ACC for minimal log discrepancies and ACC for $a$-lc thresholds of exceptional singularities. We also prove ACC for normalized volumes of exceptional singularities.
\maketitle
\pagestyle{myheadings}\markboth{\hfill  Jihao Liu\hfill}{\hfill Towards the equivalence of the ACC for $a$-lcts and the ACC for mlds\hfill}

% admitting an $\epsilon$-plt blow up
\tableofcontents

\section{Introduction}

In this paper we work over the field of complex numbers $\mathbb C$.

In birational geometry, many algebraic invariants are introduced to study the behavior of singularities. In this paper, we focus on two important algebraic invariants: the \emph{minimal log discrepancies} and the \emph{$a$-lc thresholds}:

\begin{defn}[Minimal log discrepancies, = Definition \ref{defn: mld}]\label{defn: mld intro}
	Let $(X\ni x,B)$ be an lc singularity. The \emph{minimal log discrepancy} of $(X\ni x,B)$ is
	$$\mld(x,X,B):=\min\{a(E,X,B)|E \text{ is a prime } \bb\text{-divisor over } X\ni x\}.$$
\end{defn}

\begin{defn}[$a$-lc thresholds, = Definition \ref{defn: alct}]\label{defn: alct intro} Let $a\geq 0$ be a real number and $(X\ni x,B)$ an lc singularity, such that $(X,B)$ is $a$-lc at $x$. The \emph{$a$-lc threshold} of $(X\ni x,B)$ with respect to an effective $\Rr$-Cartier $\mathbb R$-divisor $G$ is
	$$a\text{-}\lct_{x}(X,B;G):=\sup\{c\geq 0|(X,B+cG) \text{ is } a\text{-lc at }x\}.$$ 
\end{defn}

\noindent\textbf{Minimal log discrepancies}. Minimal log discrepancies (mlds for short) are important invariants of singularities that play a fundamental role in higher dimensional birational geometry. They are not only invariants that characterize the singularities of varieties, but also behave nicely when running the minimal model program. In \cite{Sho04}, Shokurov proved that the conjecture on termination of flips follows from two conjectures on mlds: the lower-semicontinuity (LSC for short) conjecture for mlds (see \cite[Conjecture 2.4]{Amb99}), and the ascending chain condition (ACC for short) conjecture for mlds:

\begin{conj}[{\cite[Problem 5]{Sho88}}, ACC for mlds]\label{conj: ACC for mlds} Let $d>0$ be an integer and $\Ii\subset[0,1]$ a set which satisfies the descending chain condition (DCC). Then the set
	$$\MLD(d,\Ii):=\{\mld(x,X,B)\mid (X\ni x,B) \text{ is lc}, \dim X=d, B\in \Ii\}$$
	satisfies the ACC. Here $B\in\Ii$ means that the coefficients of $B$ belong to the set $\Ii$.
\end{conj}

It turns out that both the ACC conjecture and the LSC conjecture for mlds are very subtle problems. The ACC conjecture for mlds is only completely known in dimension $\leq 2$ (cf. \cite{Ale93}, \cite{Sho91}) and for toric varieties in any dimension (cf. \cite{Bor97}, \cite{Amb06}). When $\Ii$ is a finite set, the ACC conjecture for mlds is known for a fixed germ by Kawakita (cf. \cite{Kaw14}). In dimension $3$, we know little about the case when $\Ii$ is a DCC set. Even when $\Ii$ is a finite set, the conjecture is only known for canonical $3$-folds by using classification of $3$-fold terminal singularities (cf. \cite{Mor85},  \cite{Kaw11}, \cite{Nak16}). It is very recently known (after an early version of this paper) that there exists a real number $0\leq\delta<1$ such that the conjecture holds for $\delta$-lc $3$-folds when $\Ii=\{0\}$ (cf. \cite{Jia19}). Moreover, the LSC conjecture for mlds is also only known up to dimension 3 (cf. \cite{Amb99}, \cite{Amb06}). \vspace{2mm}

\noindent\textbf{$a$-lc thresholds}. The $a$-lc thresholds are also important algebraic invariants. When $a=0$, we get the lc thersholds, and when $a=1$, we get the canonical thresholds. It is also conjectured that $a$-lc thresholds satisfy the ACC.

\begin{conj}[ACC for $a$-lc thresholds]\label{conj: ACC for aLCTs} Let $d>0$ be an integer, $a\ge 0$ a real number, and $\Ii\subset [0,1]$ and $\Ii'\subset [0,+\infty)$ two DCC sets. Then the set 
	\begin{align*}
a\text{-}\LCT(d,\Ii,\Ii'):=\{a\text{-}\lct_x(X,B;G)|& (X\ni x,B)\text{ is } \text{lc},(X,B)\text{ is } a\text{-lc at } x,\\
&\dim X=d,B\in\Ii, G\in\Ii'\}
\end{align*}
	satisfies the ACC. 
\end{conj}

Usually, Conjecture \ref{conj: ACC for aLCTs} is considered to be weaker than Conjecture \ref{conj: ACC for mlds}, and is believed to be comparably easier to tackle. There are two reasons:
\begin{itemize}
    \item Birkar and Shokurov show that Conjecture \ref{conj: ACC for mlds} implies Conjecture \ref{conj: ACC for aLCTs} of the same dimension, for every $a\geq 0$ (cf.~\cite{BS10}). As a corollary, they show Conjecture \ref{conj: ACC for aLCTs} in dimension $\leq 2$.
    \item Hacon, M\textsuperscript{c}Kernan and Xu prove Conjecture \ref{conj: ACC for aLCTs} when $a=0$ (cf. ~\cite[Theorem 1.1]{HMX14}). Their result is usually called the \emph{ACC for lc thresholds}. 
\end{itemize}

Therefore, although Conjecture \ref{conj: ACC for aLCTs} remains open in dimension $\geq 3$, it is natural to ask whether Conjecture \ref{conj: ACC for aLCTs} implies Conjecture \ref{conj: ACC for mlds}. We hope that this could lead to further progress towards Conjecture \ref{conj: ACC for mlds}.

\begin{ques}\label{ques: equivalence}
Does Conjecture \ref{conj: ACC for aLCTs} imply Conjecture \ref{conj: ACC for mlds} (therefore they are equivalent)?
\end{ques}

The main theorem of our paper indeed gives an affirmative answer to Question \ref{ques: equivalence} for non-canonical singularities. More precisely, we have an equivalence of Conjecture \ref{conj: ACC for mlds} and Conjecture \ref{conj: ACC for aLCTs} in the following sense:

\begin{thm}\label{thm: alct mld equivalence 01}
Let $d>0$ be an integer. Then the following two statements are equivalent:
\begin{enumerate}
    \item For every DCC set $\Ii\subset [0,1)$, $\MLD(d,\Ii)$ does not have an accumulation point from below which belongs to $[0,1)$. 
        \item $a\text{-}\LCT(d,\Ii,\Ii')$ satisfies the ACC for any real number $0\leq a<1$ and any two DCC sets $\Ii\subset [0,1]$ and $\Ii'\subset [0,+\infty)$.
\end{enumerate}
\end{thm}

The following two precise statements imply Theorem \ref{thm: alct mld equivalence 01} immediately:

\begin{thm}\label{thm: alct mld equivalence strict} 
Let $d>0$ be an integer and $0<a<1$ a real number. Assume that for any two finite sets $\Ii_0\subset [0,1]$ and $\Ii_0'\subset [0,+\infty)$, there exists a sequence of real numbers $\{\alpha_i\}_{i=1}^{+\infty}\subset [0,a)$ satisfying the following.
    \begin{itemize}
    \item $a=\lim_{i\rightarrow+\infty}\alpha_i$,
    \item $\alpha_i$-$\LCT(d,\Ii_0,\Ii_0')$ satisfies the ACC for every $i$, and
    \item $a$-$\LCT(d,\Ii_0,\Ii_0')$ satisfies the ACC.
    \end{itemize}
Then for any DCC set $\Ii\subset [0,1]$, $a$ is not an accumulation point from below of $\MLD(d,\Ii)$.
\end{thm}

\begin{thm}\label{thm: mld implies alct}
Let $d>0$ be an integer and $0<a<1$ a real number. Assume that $a$ is not an accumulation point from below of $\MLD(d,\Ii)$ for any DCC set $\Ii\subset [0,1]$. Then for any two DCC sets $\Ii'\subset [0,1]$ and $\Ii''\subset [0,+\infty)$, $a$-$\LCT(d,\Ii',\Ii'')$ satisfies the ACC.
\end{thm}

Since the total log discrepancy for any pair is $\leq 1$, Theorem \ref{thm: alct mld equivalence 01} implies the following corollary:

\begin{cor}\label{cor: alct total mld equivalence}
Let $d>0$ be an integer and $\Ii\subset [0,1]$ a DCC set. Assume that Conjecture \ref{conj: ACC for aLCTs} holds for every $0\leq a<1$. Then $1$ is the only possible accumulation point of
$$\{\tmld(X,B)| (X,B)\text{ is a pair, } B\in\Ii\}$$
from below, where $\tmld(X,B)$ is the total minimal log discrepancy of $(X,B)$.
\end{cor}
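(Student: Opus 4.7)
The plan is to combine the implication from the ACC for $a$-lc thresholds to the ACC for mlds (given by Corollary~\ref{cor: alct mld equivalence 01}) with an elementary decomposition of the total mld. For any pair $(X,B)$ with $B=\sum_i b_iB_i$ and $b_i\in\Ii$, I would use the identity
\[
\tmld(X,B)=\min\bigl(\mld(X,B),\ \min_i(1-b_i)\bigr),
\]
since a prime divisor contributing to $\tmld(X,B)$ either has center of codimension $\geq 2$ on $X$ (so its log discrepancy is at least $\mld(X,B)$) or equals one of the components $B_i$ (log discrepancy $1-b_i$), and both infima are attained.

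Read in the same spirit as Theorem~\ref{thm: alct mld equivalence strict}, the corollary asserts that no $a\in[0,1)$ is an accumulation point from below. I would argue by contradiction: suppose some $a\in[0,1)$ admits a strictly increasing sequence $t_n=\tmld(X_n,B_n)\to a$ with $B_n\in\Ii$. After passing to a subsequence I may arrange that, for every $n$, the minimum above is attained by a fixed one of the two terms. In the first case $t_n=\mld(X_n,B_n)$ lies in $\MLD(d,\Ii)\cap[0,1-t]$ for $t:=(1-a)/2>0$ and accumulates at $a$ from below, contradicting Corollary~\ref{cor: alct mld equivalence 01}. In the second case $t_n=1-b_n$ for some $b_n\in\Ii$, so $b_n\to 1-a$ strictly from above, producing an infinite strictly decreasing sequence in the DCC set $\Ii$, again a contradiction. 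Combined with $\tmld\leq 1$, this leaves $1$ as the only possible accumulation point.

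There is no serious obstacle here: once one observes that the total log discrepancy cleanly splits into a codimension-one part (controlled by $\Ii$) and a codimension $\geq 2$ part (controlled by the ACC for mlds), the statement reduces to Corollary~\ref{cor: alct mld equivalence 01} together with the DCC hypothesis on $\Ii$. The only subtlety worth flagging is that the case distinction must be performed before extracting the subsequence, so that in the first case the sampled value $t_n$ genuinely equals $\mld(X_n,B_n)$ and therefore qualifies as an element of $\MLD(d,\Ii)$ to which the ACC for mlds can be applied.
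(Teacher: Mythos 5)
Your argument is correct and coincides in essence with the paper's proof, which likewise splits according to whether $\tmld(X,B)$ is computed by an exceptional prime $\bb$-divisor (in which case one applies Corollary~\ref{cor: alct mld equivalence 01}) or by a prime divisor of $X$ itself (in which case $\tmld(X,B)=1-b$ with $b\in\Ii$, and the DCC hypothesis on $\Ii$ finishes the argument). The only small inaccuracy is that your displayed formula for $\tmld$ omits the value $1$ coming from prime divisors of $X$ not contained in $\Supp B$, so it should read $\tmld(X,B)=\min\bigl(\mld(X,B),\,\min_i(1-b_i),\,1\bigr)$, but this has no bearing on the conclusion since $1\notin[0,1)$.
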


In a recent paper \cite{Kaw18}, Kawakita shows that the ideal version of Conjecture \ref{conj: ACC for mlds} and Conjecture \ref{conj: ACC for aLCTs} are equivalent for any fixed klt ambient variety (cf. \cite[Theorem 4.6]{Kaw18}) by using the method of generic limits. 

Our paper, however, has a completely different approach. The central part of this paper, Section 5, is a detailed study of the structure of exceptional divisors with log discrepancies between $0$ and $1$, including a reduction to klt singularities by applying a precise inversion of adjunction (cf. Section 3) and a study on multiplicities (cf. Lemma \ref{lem: divisor innermultiplicity less than 1} and Lemma \ref{multiplicity compare}). 

Besides the study of exceptional divisors with log discrepancies between $0$ and $1$, we need to use a generalized version of Birkar's result on the existence of monotonic $n$-complements (cf. \cite{Bir19}), which is called the \emph{$(n,\Ii)$-complement}, as in Section 4. We refer the readers to \cite{HLS19} for a general theory in this direction. There is a reason why we cannot directly apply Birkar's result (see Remark \ref{rem: must r divisor} for details).

\vspace{2mm}

\noindent\textit{Structure of the paper}. In Section 2, we introduce basic notation and conventions. In Section 3, we introduce a precise inversion of adjunction, which will be used to reduce our main theorem to the case of klt singularities (see Lemma \ref{lem: lc mld to klt mld} for details). In Section 4, we introduce $(n,\Ii)$-complements, and use Birkar's result of existence of $n$-complements to show the existence of local $(n,\Ii)$-complements. In Section 5, we study the behavior of exceptional divisors with log discrepancies between $0$ and $1$. In Section 6, we give several results when assuming Conjecture \ref{conj: ACC for mlds} or Conjecture \ref{conj: ACC for aLCTs}, and prove our main theorems.
\vspace{2mm}

\noindent\textbf{Acknowledgement}. The author is grateful to his advisor Christopher D. Hacon for suggesting this problem and for his constant support and many useful discussions. After Christopher D. Hacon's suggestion, Jingjun Han also suggested this question to the author when he visited the University of Utah in November 2017. He would like to thank him for many useful discussions and comments during the preparation of the first version of the paper. Besides, he would like to thank Jingjun Han for suggesting a detailed writing of Section 3 and Christopher D. Hacon for checking the details of this section. He would like to thank Chen Jiang for many useful discussions and comments. He would like to thank Masayuki Kawakita for reading his manuscript and giving many useful comments. He would like to thank Ching-Jui Lai for pointing out a gap in the proof of Lemma \ref{lem: divisor innermultiplicity less than 1} of an early version of this paper. The author was partially supported by NSF research grants no: DMS-1300750, DMS-1265285 and by a grant from the Simons Foundation; Award Number: 256202.

%The author would like to dedicate this paper to his beloved grandpa who is fighting with cancer at his hometown.\vspace{2mm}

%\noindent\textit{Major changes since Version 1}.
%\begin{itemize}
%\item The sketch of proof of Theorem \ref{thm: alct implies mld finite coefficient} is moved from the introduction to Section 5.
%\item We add Corollary \ref{cor: alct total mld equivalence}.
%\item Example \ref{ex: baby case alct mld} is moved from Section 5 to the introduction.
%\item We adopt the notation of $\bb$-divisors, so that the notation in the proof of the main theorem is less confusing.
%\item Most of the results in this paper have been improved to not necessarily closed points.
%\item We clarify the notation of ``near a point" and ``at a point".
%\item We adopt the notation of $(n,\Ii_1,\Ii_2)$-complements.
%\item Proof of Theorem \ref{thm: mld implies alct} and Lemma \ref{lem: klt finite valuation less 1} are given.
%\item Proof of Lemma \ref{lem: divisor innermultiplicity less than 1} is re-written and simplified, and is moved from Section 5 to Section 3.
%\item We add \textit{Step 2} in the proof of Theorem \ref{thm: alct implies mld finite coefficient} in order to deal with non-klt germs.
%\item We simplified the proof of Theorem \ref{thm: alct implies mld finite coefficient} after Construction \ref{cons: big construction of alct mld equivalence}.
%\item We add a proof of Claim \ref{claim: dijk less than cij}.
%\end{itemize}

\section{Notation and conventions}

We adopt the standard notation and definitions in \cite{Sho92} and \cite{KM98}, and will freely use them.

\begin{defn}[$\bb$-divisors] Let $X$ be a normal variety. A $\bb$-$\Rr$ Cartier $\bb$-divisor ($\bb$-divisor for short) over $X$ is the choice of a projective birational morphism $Y\to X$ from a normal variety and an $\Rr$-Cartier $\mathbb R$-divisor $M$ on $Y$ up to the following equivalence: another projective birational morphism $Y'\to X$ from a normal variety and an $\Rr$-Cartier $\Rr$-divisor $M'$ defines the same $\bb$-divisor if there is a common resolution $W\to Y$ and $W\to Y'$ on which the pullback of $M$ and $M'$ coincide.	

	Let $E$ be a prime $\bb$-divisor over $X$. The \emph{center} of $E$ on $X$ is the closure of its image on $X$, and is denoted by $c_X(E)$. For any (not necessarily closed) point $x\in X$, if $c_X(E)=\bar x$, we say that $E$ is over $X\ni x$. If $c_X(E)$ is not a divisor, $E$ is called \emph{exceptional} over $X$. If $c_X(E)$ is a divisor, we say that $E$ is \emph{on} $X$.
\end{defn}

\begin{defn}[Multiplicities] Let $X$ be a normal variety, $E$ a prime divisor on $X$ and $D$ an $\Rr$-divisor on $X$. We define $\mult_ED$ to be the multiplicity of $E$ along $D$. 
Let $F$ be a prime $\bb$-divisor over $X$, $B$ an $\Rr$-Cartier $\Rr$-divisor on $X$ and $\phi: Y\to X$ a birational morphism such that $F$ is on $Y$. We define $\mult_FB:=\mult_F\phi^*D$.
\end{defn}

\begin{defn}[Pairs and singularities]\label{defn: positivity}
	A \emph{pair} $(X,B)$ consists of a normal variety $X$ and an effective $\Rr$-divisor $B$ on $X$ such that $K_X+B$ is $\Rr$-Cartier.
	Let $\phi:W\to X$
	be any log resolution of $(X,B)$ and let
	$$K_W+B_W:=\phi^{*}(K_X+B).$$
	The \emph{log discrepancy} of a prime divisor $D$ on $W$ with respect to $(X,B)$ is $1-\mult_{D}B_W$ and is denoted by $a(D,X,B).$
	For any real number $a\geq 0$, we say that $(X,B)$ is lc (resp. klt, $a$-lc) if $a(D,X,B)\ge0$ (resp. $>0$, $\ge a$) for every log resolution $\phi:W\to X$ as above and every prime divisor $D$ on $W$.

	We say that $(X,B)$ is dlt if $a(E,X,B)>0$ for any exceptional prime divisor $D$ over $X$ on some log resolution $\phi:W\to X$ as above. We say that $(X,B)$ is $\Qq$-factorial if every $\Qq$-divisor on $X$ is $\Qq$-Cartier.
	
	A \emph{singularity} $(X\ni x,B)$ consists of a pair $(X,B)$ and a (\textbf{not necessarily closed}) point $x\in X$. $(X\ni x,B)$ is lc (resp. klt, $a$-lc, dlt, $\Qq$-factorial) if $(X,B)$ is lc (resp. klt, $a$-lc, dlt, $\Qq$-factorial) near $x$. We say that $(X,B)$ is klt (resp. $a$-lc) at $x$ if for every prime $\bb$-divisor $E$ over $X\ni x$, $a(E,X,B)>0$ (resp. $\geq a$).
	
	For any subvariety $V$ of $X$ and point $x\in V$, we define $\codim(x,V):=\dim V-\dim\bar x$ to be the codimension of $x$ in $V$.
	
	An \emph{extraction} $f: Y\rightarrow X$ is a birational morphism such that $Y$ is $\Qq$-factorial klt. We remark that in this paper, we allow $f$ to be small.
\end{defn}

\begin{defn}[Minimal log discrepancies]\label{defn: mld}
	Let $(X\ni x,B)$ be an lc singularity. The \emph{minimal log discrepancy} of $(X\ni x,B)$ is
	$$\mld(x,X,B):=\min\{a(E,X,B)|E \text{ is a prime } \bb\text{-divisor over } X\ni x\}.$$
	If $E$ is a prime $\bb$-divisor over $X\ni x$ such that $a(E,X,B)=\mld(x,X,B)$, we say that the minimal log discrepancy of $(X\ni x,B)$ is \emph{attained at} $E$.
	We also define
\begin{align*}
    \mld(\subset x,X,B):=\min\{a(E,X,B)|&E \text{ is a prime } \bb\text{-divisor over } X,\\
&\text{such that }c_X(E)\subset\bar x\}
\end{align*}
and
$$\tmld(X,B):=\min\{a(E,X,B)| E\text{ is a prime } \bb\text{-divisor over } X\}.$$
\end{defn}

\begin{defn}[$a$-lc thresholds]\label{defn: alct} Let $a\geq 0$ be a real number and $(X\ni x,B)$ an lc singularity such that $(X,B)$ is $a$-lc at $x$. The \emph{$a$-lc threshold} of $(X\ni x,B)$ with respect to an effective $\Rr$-Cartier $\mathbb R$-divisor $G$ is
	$$a\text{-}\lct_{x}(X,B;G):=\sup\{c\geq 0|(X,B+cG) \text{ is } a\text{-lc at }x\}.$$ 
	In particular, if $a=0$, we obtain the lc threshold at $x$. For simplicity, we will use $\lct_{x}(X,B;G)$ instead of $0$-$\lct_{x}(X,B;G)$.
	
	If $E$ is a prime $\bb$-divisor over $X\ni x$ such that $a(E,X,B)>a$ and $$a(E,X,B+a\text{-}\lct_x(X,B;G)G)=a,$$
	we say that the $a$-lc threshold of $(X,B;G)$ at $x$ is attained at $E$.
\end{defn}

\begin{ex}[$a$-lc threshold not attained at any prime $\bb$-divisor]\label{ex: no divisor attaining alct}  Let $0<a<1$ be any real number, $X:=\mathbb P^2$, $H$ a curve of degree $1$ on $X$, and $x\in H$ any closed point. Then $(X,H)$ is $1$-lc at $x$, but $(X,H)$ is not $a$-lc near $x$. Since
$$a\text{-}\lct_x(X,H;H)=0,$$
 $a$-$\lct_x(X,H;H)$ is not attained at any prime $\bb$-divisor over $X\ni x$.
\end{ex}

\begin{defn}\label{defn: DCC and ACC}
	Let $\Ii$ be a set of real numbers. We define
	\begin{itemize}
	    \item $\Ii_{+}:=\{0\}\cup\{0\leq j\leq 1|j=\sum_{p=1}^li_p|i_1,\dots,i_l\in\Ii,l\in\Nn^{+}\}$, and
	    \item $D(\Ii):=\{0\leq a\leq 1|a=\frac{m-1+f}{m},m\in\Nn^+,f\in\Ii_+\}.$
	\end{itemize}
	We say that $\Ii$ satisfies the \emph{descending chain condition} (DCC) if any decreasing sequence $a_1 \ge \cdots \ge a_k \ge\cdots$ in $\Ii$ stabilizes. We say that $\Ii$ satisfies the \emph{ascending chain condition} (ACC) if any increasing sequence in $\Ii$ stabilizes. 
	
	For any real number $a$, $a$ is called an \emph{accumulation point from below} of $\Ii$, if there exists a strictly increasing sequence $\{\alpha_i\}_{i=1}^{+\infty}\subset\Ii$, such that $a=\lim_{i\rightarrow+\infty}\alpha_i$.
	
	For any normal variety $X$ and $\Rr$-divisor $B$ on $X$, we write $B\in\Ii$ if all the coefficients of $B$ belong to $\Ii$.
\end{defn}

The next lemma is elementary. A proof can be found in \cite[4.4]{MP04}.
\begin{lem}\label{lem: di is dcc}
If $\Ii\subset [0,1]$ is a DCC set, then $D(\Ii)\subset [0,1]$ is a DCC set.
\end{lem}

\begin{defn}\label{defn: dcc and acc}
Let $d>0$ be an integer, $a\geq 0$ a real number and $\Ii\subset [0,1]$ and $\Ii'\subset [0,+\infty)$ two sets. We define
	$$\MLD(d,\Ii):=\{\mld(X\ni x,B)\mid (X\ni x,B) \text{ is lc, } \dim X=d, B\in \Ii\}$$
and
\begin{align*}
a\text{-}\LCT(d,\Ii,\Ii'):=\{a\text{-}\lct_x(X,B;G)|& (X\ni x,B)\text{ is } \text{lc},(X,B)\text{ is } a\text{-lc at } x,\\
&\dim X=d,B\in\Ii, G\in\Ii'\}.
\end{align*}
For simplicity, we will use $\LCT(d,\Ii,\Ii')$ instead of $0$-$\LCT(d,\Ii,\Ii')$.
\end{defn}

The next theorem is the well-known ACC for log canonical thresholds, which is proved by Hacon, M\textsuperscript{c}Kernan and Xu.

\begin{thm}[{\cite[Theorem 1.1]{HMX14}}]\label{thm: acc lct}
Let $d>0$ be an integer and $\Ii\subset [0,1]$ and $\Ii'\subset [0,+\infty)$ two DCC sets. Then $\LCT(d,\Ii,\Ii')$ satisfies the ACC.
\end{thm}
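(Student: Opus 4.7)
The plan is to follow the strategy of Hacon-M\textsuperscript{c}Kernan-Xu from \cite{HMX14}. The crucial observation is that ACC for lc thresholds cannot be attacked in isolation; it must be proved simultaneously with two companion statements, namely (i) the \emph{global ACC}, asserting that the coefficients of lc pairs $(X,B)$ with $B\in\Ii$ and $K_X+B\equiv 0$ form an ACC set, and (ii) \emph{DCC for volumes} of lc pairs of log general type with coefficients in a fixed DCC set. These three statements are coupled by a joint induction on the dimension $d$, each feeding the next one in the lower dimension.

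Suppose for contradiction that there is a strictly increasing sequence $t_i=\lct_{x_i}(X_i,B_i;G_i)$ with $\dim X_i=d$, $B_i\in\Ii$ and $G_i\in\Ii'$. After passing to a subsequence I would take a $\Qq$-factorial dlt modification $\pi_i\colon Y_i\to X_i$ of $(X_i,B_i+t_iG_i)$, thereby extracting a prime divisor $E_i$ of coefficient exactly $1$ on which the lc threshold is attained. Adjunction along $E_i$ then yields pairs $(E_i,\Theta_i)$ whose different coefficients lie in a DCC set $\Jj=\Jj(\Ii,\Ii',d)$, and the strictly increasing sequence $t_i$ manifests itself as a strictly monotone sequence of coefficients appearing in $\Theta_i$.

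Now I would run a $(K_{E_i}+\Theta_i)$-MMP relative to the image of $E_i$ in $X_i$ and analyze the two possible outcomes. If the MMP terminates at a log canonical model of log general type, the inductive DCC for volumes in dimension $d-1$, combined with boundedness results for pairs of log general type in the spirit of Birkar-Cascini-Hacon-M\textsuperscript{c}Kernan, confines the $(E_i,\Theta_i)$ to a log birationally bounded family; otherwise one obtains a Mori fibre structure whose general fibre is log Calabi-Yau, where the inductive global ACC applies. In either case the strictly monotone coefficient sequence produced above contradicts the inductive hypothesis in dimension $d-1$. The main obstacle is the delicate interplay of the three inductively-coupled statements, and in particular the proof of DCC for volumes: it demands deep boundedness theorems and careful manipulation of the MMP with scaling in order to upgrade log birational boundedness to genuine boundedness of the relevant models. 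Since the full argument occupies \cite{HMX14}, in practice one simply invokes that reference, as the present paper does.
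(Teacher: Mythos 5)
The paper does not reprove this statement; it is quoted verbatim as \cite[Theorem 1.1]{HMX14}, exactly as you conclude at the end of your sketch. Your outline of the Hacon--M\textsuperscript{c}Kernan--Xu argument (joint induction with the global ACC and DCC for volumes, dlt modification to extract an lc place, adjunction to drop dimension, and the MMP dichotomy between log general type and Mori fibre space) is a fair high-level summary of that reference, so your proposal matches the paper's approach of simply invoking \cite{HMX14}.
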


\begin{defn}\label{defn: consensus}
    Let $d>0$ be an integer, $a>0$ a real number and $\Ii\subset [0,1]$ and $\Ii\subset [0,+\infty)$ two sets. To simplify our following statements, we introduce the following notation.
    \begin{itemize}
        \item $\bm{M}(d,\Ii,a)$ means the following statement: $a$ is not an accumulation point of $\MLD(d,\Ii)$ from below.
        \item $\bm{L}(d,\Ii,\Ii',a)$ means the following statement: $a$-$\LCT(d,\Ii,\Ii')$ satisfies the ACC.
        \item $\bm{L}'(d,\Ii,\Ii',a)$ means the following statement: there exists a sequence of real numbers $\{\alpha_i\}_{i=1}^{+\infty}\subset [0,a)$, such that
        \begin{itemize}
            \item  $a=\lim_{i\rightarrow+\infty}\alpha_i$, 
            \item $\alpha_i$-$\LCT(d,\Ii,\Ii')$ satisfies the ACC for each $i$, and
            \item $a$-$\LCT(d,\Ii,\Ii')$ satisfies the ACC.
        \end{itemize}
    \end{itemize}
\end{defn}

\begin{rem}
Theorem \ref{thm: alct mld equivalence strict} can be restated as follows: Let $d>0$ be an integer and $0<a<1$ be a real number. Assume that $\bm{L}'(d,\Ii_0,\Ii_0',a)$ holds for any two finite sets $\Ii_0\subset [0,1]$ and $\Ii_0'\subset [0,+\infty)$, then $\bm{M}(d,\Ii,a)$ holds for every DCC set $\Ii\subset [0,1]$.
\end{rem}

%\begin{conj}\label{defn: acc mld conjecture}\rm (ACC for minimal log discrepancies, or ACC for mlds)
%For any fixed integer $d>0$, any DCC set $I\subset [0,\infty)$, $MLD(I,d)$ is an ACC set.
%\end{conj}
%
%\begin{conj}\label{defn: acc alct conjecture}\rm (ACC for $a$-lc thresholds)
%For any fixed integer $d>0$, any real number $a\geq 0$, any DCC set $I\subset [0,\infty)$, the set
%\begin{center}
%$ \{alct(X,B;G)|(X,B;G)$ is a triple, $dim X=d$, $B,G\in I\}$
%\end{center}
%is an ACC set.
%\end{conj}

\section{A precise inversion of adjunction} In this subsection we prove several results on precise inversion of adjunction. Most of the results follow along the same lines of the proof in \cite[Section 17]{Kol92}, but there are some small differences:
\begin{itemize}
    \item We do not need to assume the termination of flips, as the required results are already known by \cite{BCHM10} and \cite{Bir12}. Thus we may show that $\mld(\subset x,X,B)=\mld(\subset x,S,\Diff_S(B))$ under certain conditions for any point $x\in S\subset X$.
    \item We deal with dlt pairs instead of plt pairs, and
    \item We deal with $\Rr$-divisors instead of $\Qq$-divisors.
\end{itemize}
As these results may be useful for other research, we decided to write their proofs in full detail.

\begin{lem}\label{lem: klt finite valuation less 1}
Let $(X,B)$ be a klt pair, then
\begin{enumerate}
 \item there are only finitely many prime $\bb$-divisors $E$ over $X$ such that $a(E,X,B)\leq 1$, 
 \item there is an extraction $g: Y\rightarrow X$ such that $g$ extracts exactly all the prime $\bb$-divisors $E$ over $X$ such that $a(E,X,B)\leq 1$, and
 \item for any birational morphism $g': Y'\rightarrow X$ which extracts exactly all the prime $\bb$-divisors $E$ over $X$ such that $a(E,X,B)\leq 1$, suppose that $K_{Y'}+B_{Y'}=g'^*(K_X+B)$, then $(Y',B_{Y'})$ is terminal.
 \end{enumerate}
\end{lem}

\begin{proof}
Let $f: W\rightarrow X$ be a log resolution of $(X,B)$, such that 
$$K_W+B_W:=f^*(K_X+B).$$
Since $(X,B)$ is klt, all the coefficients of $B_W$ are $<1$. Suppose that $1-c$ is the maximum coefficient of $B_W$. 
Let $g: W'\rightarrow W$ be the blow-up of the strata of $\Supp B_W$ that are of codimension $\geq 2$ in $W$ and $K_{W'}+B_{W'}:=g^*(K_W+B_W).$ Possibly replacing $(W,B_W)$ with $(W',B_{W'})$ and repeating this process for at most $\lceil\frac{1}{c}\rceil$ times, we may suppose that for every prime $\bb$-divisor $E$ that is exceptional over $W$, $a(E,X,B)>1$. As there are only finitely many irreducible components of $B_W$, we deduce (1).  (2) follows from (1) and  \cite[Corollary 1.4.3]{BCHM10} and (3) follows from (1).
\end{proof}

\begin{lem}\label{lem: ajundction ld correspondence}
Let $m>0$ be an integer and $(X\ni x,B)$ a dlt singularity. Assume that 
\begin{itemize}
    \item $S$ is an irreducible component of $\lfloor B\rfloor$,
    \item $K_S+B_S:=(K_X+B)|_S$, and
    \item $E_{1,S},\dots,E_{m,S}$ are distinct prime $\bb$-divisors over $S$, such that $c_S(E_{i,S})\subset\bar x$ (resp. $\subsetneq\bar x$) for every $i$.
\end{itemize}  
Then there are prime $\bb$-divisors $E_1,\dots,E_m$ over $X$ and a log resolution $f: Y\rightarrow X$ of $(X,B)$ satisfying the following.
\begin{enumerate}
\item $c_X(E_i)\subset\bar x$ (resp. $\subsetneq\bar x$) for every $i$,
    \item $E_1,\dots,E_m$ are on $Y$,
    \item $E_i|_{S_Y}=E_i\cap S_Y=E_{i,S}$ for every $i$, where $S_Y:=f^{-1}_*S$, and
    \item $a(E_i,X,B)=a(E_{i,S},S,B_S)$ for every $i$.
\end{enumerate}
In particular, 
$$\mld(\subset x,S,B_S)\geq\mld(\subset x,X,B).$$
\end{lem}
\begin{proof}
The proof almost follows from the same lines of the proof of \cite[Theorem 17.2]{Kol92}. Let $f: Y\rightarrow X$ be a log resolution of $(X,B)$, such that 
\begin{itemize}
   \item the induced morphism $f_{S}:S_Y\rightarrow S$ is a log resolution of $(S,B_S)$, where $S_Y$ is the strict transform of $S$ on $Y$, 
   \item $E_{1,S},\dots,E_{m,S}$ are on $S_Y$,
   \item $(B_Y-S_Y)\cap S_Y=\emptyset$, where $B_Y$ is the strict transform of $B$ on $Y$, and
   \item for any exceptional divisor $F$ of $f$, if $F\cap S_Y\not=\emptyset$, then $c_{X}(F)\subset S$. In particular, $c_{X}(F)=c_X(F)\cap S$ for any exceptional divisor $F$ of $f$.
\end{itemize}
Let
$$K_Y+B_Y+\sum_i(1-a(E_i,X,B))E_i=f^*(K_X+B),$$
then 
$$K_{S_Y}+\sum_{i}(1-a(E_i,X,B))(E_i\cap S_Y)\equiv f_S^*(K_S+B_S).$$
Since $E_{1,S},\dots,E_{m,S}$ are on $S_Y$, possibly reordering indices, for every integer $1\leq i\leq m$, we may assume that $E_i\cap S_Y=E_{i,S}$. Since $$c_X(E_i)=c_X(E_i)\cap S=c_S(E_{i,m})\subset\bar x \text{ (resp. }\subsetneq\bar x\text{)}$$ $f$ and $E_1,\dots,E_m$ satisfy our requirements. Since
$$\min_{1\leq i\leq m}\{a(E_{i,S},S,B_S)\}=\min_{1\leq i\leq m}\{a(E_{i},X,B)\}\geq\mld(\subset x,X,B)$$
for any set $\{E_{i,S}\}_{i=1}^m$ of prime $\bb$-divisors over $S$ such that $c_{S}(E_{i,S})\subset\bar x$, we have $\mld(\subset x,S,B_S)\geq\mld(\subset x,X,B).$
\end{proof}

\begin{lem}\label{lem: inv of adj strong}
Let $(X,B)$ be a $\Qq$-factorial dlt pair, $S$ an irreducible component of $\lfloor B\rfloor$, $K_S+B_S:=(K_X+B)|_S$ and $s\in S\subset X$ a point. Assume that 
$$a:=\min\{a(E,X,B)|c_X(E)\cap S\subset\bar s\}\leq 1,$$
then
$$a=\mld(\subset s,S,B_S)=\mld(\subset s,X,B).$$
\end{lem}

\begin{proof}
The proof is similar to \cite[Corollary 17.11(1)]{Kol92}. Since $(X,B)$ is $\Qq$-factorial dlt and $a\leq 1$, there exists an extraction $f: Y\rightarrow X$ of a prime $\bb$-divisor $E$ over $X$, such that $c_X(E)\cap S\subset\bar s$ and $a(E,X,B)=a$. Let 
\begin{itemize}
    \item $S_Y$ be the strict transform of $S$ on $Y$,
    \item $K_{S_Y}+B_{S_Y}:=f^*(K_X+B)|_{S_Y}$, and
    \item $f_S:S_Y\rightarrow S$ the birational morphism induced by $f$.
\end{itemize} 
Then $S_Y\cap E\not=\emptyset$ and $f_S(S_Y\cap E)\subset\bar s$. By adjunction, there is an irreducible component $E_{S,Y}$ of $S_Y\cap E$, two integers $m,k>0$ and a real number $c\geq 0$, such that 
$$\mult_{E_{S_Y}}B_{S_Y}=\frac{m-1+c+k(1-a)}{m}\geq 1-a,$$
which implies that $a(E_{S_Y},S_Y,B_{S_Y})\leq a.$ Thus
\begin{align*}
    \mld(\subset s,S,B_S)&\leq a(E_{S_Y},S,B_S)=a(E_{S_Y},S_Y,B_{S_Y})\leq a\\
    &=\min\{a(E,X,B)|c_X(E)\cap S\subset\bar s\}\leq\mld(\subset s,X,B).
\end{align*}
By Lemma \ref{lem: ajundction ld correspondence}, $\mld(\subset s,X,B)\leq\mld(\subset s,S,B_S)$. Thus $a=\mld(\subset s,S,B_S)=\mld(\subset s,X,B)$.
\end{proof}

\begin{lem}\label{lem: terminalization near dlt}
Let $n\geq 0$ be an integer, $(X,B)$ a dlt pair, $S_1,\dots,S_n$ the irreducible components of $\lfloor B\rfloor$, $V:=X\cap_{i=1}^nS_i$ a subvariety, and $x\in V$ a point, such that
\begin{itemize}
    \item $\codim(x,V)\geq 1$,
    \item for any irreducible component $S_0\not\in\{S_1,\dots, S_n\}$ of $\lfloor B\rfloor$, $\bar x\cap S_0=\emptyset$.
\end{itemize}
Let $\mathcal{D}$ be the set of all the prime $\bb$-divisors $E$ over $X$ such that $a(E,X,B)\leq 1$ and $c_X(E)\subsetneq\bar x$. Then
\begin{enumerate} 
\item $\mathcal{D}$ is a finite set,
\item there is an extraction $f_0: Y_0\rightarrow X$ which extracts exactly all the prime $\bb$-divisors belonging to $\mathcal{D}$, and
\item for any birational morphism $f: Y\rightarrow X$ which extracts exactly all the prime $\bb$-divisors belonging to $\mathcal{D}$, suppose that $K_Y+B_Y:=f^*(K_X+B)$, then
\begin{enumerate}
\item $f$ is an isomorphism near any codimension $1$ point of $V$,
\item $(Y,B_Y)$ is dlt,
\item for every prime $\bb$-divisor $F$ over $X$ such that 
\begin{itemize}
\item $F$ is exceptional over $Y$, and
\item  $c_X(F)\subsetneq\bar x$,
\end{itemize}
we have $a(F,Y,B_Y)=a(F,X,B)>1$.
\end{enumerate}
\end{enumerate}
\end{lem}

\begin{proof}
We use induction on $n$. When $n=0$, $(X\ni x,B)$ is klt, and the lemma follows from Lemma \ref{lem: klt finite valuation less 1}.

Suppose that $n\geq 1$. First we show (1).

Let $S:=S_n$ and $K_S+B_S:=(K_X+B)|_S$. Then $(S,B_S)$ is a dlt pair, $S_1|_{S},\dots,S_{n-1}|_{S}$ are irreducible components of $B_S$, $x\in V_S:=S\cap_{i=1}^{n-1}(S_i|_{S})\cong V$ is a point such that $\codim(x,V_S)\geq 1$, and for any irreducible component $S_{0,S}\not\in\{S_1|_S,\dots,S_{n-1}|_S\}$ of $\lfloor B_S\rfloor$, $\bar x\cap S_{0,S}=\emptyset$. 

Let $\mathcal{D}_S$ be the set of all the prime $\bb$-divisors $E_S$ over $S$ such that $a(E_S,S,B_S)\leq 1$ and $c_S(E_S)\subsetneq\bar x$. By induction, $\mathcal{D}_S$ is a finite set. Suppose that $\mathcal{D}_S=\{E_{1,S},\dots,E_{m,S}\}$. Then for every $1\leq i\leq m$, $a_i:=a(E_{i,S},S,B_S)\leq 1$ and $c_S(E_{i,S})\subsetneq\bar x$. Therefore by Lemma  \ref{lem: ajundction ld correspondence}, there is a log resolution $h: Z\rightarrow X$ of $(X,B)$ and prime $\bb$-divisors $E_1,\dots,E_m$ over $X$, such that for every $1\leq i\leq m$,
\begin{itemize}
    \item $c_X(E_i)\subsetneq\bar x$,
    \item $E_i$ is on $Z$,
    \item $E_i|_{S_Z}=E_i\cap S_Z=E_{i,S}$, where $S_Z:=h^{-1}_*S$, and
    \item $a(E_i,X,B)=a_i\leq 1$.
\end{itemize}
Let $B_Z:=h^{-1}_*B$, $E_{i,Z}:=c_Z(E_i)$ for every $i$, and let $\Psi$ be the sum of all the reduced exceptional divisors of $h$ on $Z$ except $E_{1,Z},\dots,E_{m,Z}$. Then $K_Z+B_Z+\sum_{i=1}^m(1-a_i)E_{i,Z}+\Psi$ is $\Qq$-factorial dlt, and we may run a $(K_Z+B_Z+\sum_{i=1}^m(1-a_i)E_{i,Z}+\Psi)$-MMP $\phi: Z\dashrightarrow W$ over $X$. By our construction, $\phi$ contracts exactly $\Psi$. Thus the birational morphism $g: W\rightarrow X$ extracts exactly $E_1,\dots, E_m$. Moreover, since $(Z,B_Z+\sum_{i=1}^mE_{i,Z}+\Psi)$ is log smooth, $\phi$ is an isomorphism near the generic point of $E_{i,Z}\cap S_Z$ for each $i$. In particular, $c_W(E_i)\cap S_W\not=\emptyset$ for any $i$, where $S_W:=g^{-1}_*S$.

Let $K_W+B_W:=g^*(K_X+B)$, $K_{S_W}+B_{S_W}:=(K_W+B_W)|_{S_W}$ and $g|_{S_W}: S_W\rightarrow S$ the induced birational morphism. By adjunction, $g|_{S_W}$ only extracts prime $\bb$-divisors belonging to $\mathcal{D}_S$. Thus by our previous statements,  $g|_{S_W}$ extracts exactly all the prime $\bb$-divisors belonging to $\mathcal{D}_S$.  Therefore, for any prime $\bb$-divisor $F_S$ over $S$ such that $F_S$ is exceptional over $S_W$ and $c_{S}(F_S)\subsetneq\bar x$, $a(F_S,S_W,B_{S_W})=a(F_S,S,B_S)>1$.

\begin{claim}\label{claim: first extraction corresponding to S}
For any prime $\bb$-divisor $F$ over $X$ such that
\begin{itemize}
\item $F$ is exceptional over $W$,
\item $c_X(F)\subsetneq\bar x$, and
\item $c_W(F)\cap V_W\not=\emptyset$,
\end{itemize}
we have $a(F,W,B_W)>1$.
\end{claim}
\begin{proof}[Proof of Claim \ref{claim: first extraction corresponding to S}]
Suppose not. Since $V_W\subset S_W$, there is a prime $\bb$-divisor $F$ over $X$ such that $F$ is exceptional over $W$, $c_X(F)\subsetneq\bar x$, $c_W(F)\cap S_W\not=\emptyset$ and $a(F,W,B_W)\leq 1$. Since $(W,B_W)$ is $\Qq$-factorial dlt and $S_W$ is an irreducible component of $\lfloor B_W\rfloor$, by Lemma \ref{lem: inv of adj strong}, 
$$\mld(\subset c_W(F)\cap S_W,S_W,B_{S_W})\leq 1.$$
Since $\codim(x,V)\geq1$ and $c_X(F)\subsetneq\bar x$, $c_W(F)\cap S_W\subsetneq g^{-1}(\bar x)\cap S_W$.
Thus there is a prime $\bb$-divisor $F_S$ over $S$, such that $F_S$ that is exceptional over $S_W$, $c_S(F_S)\subsetneq\bar x$ and $a(F_S,S_W,B_{S_W})\leq 1$. This contradicts to the induction hypothesis.
\end{proof}

\noindent\textit{Proof of Lemma \ref{lem: terminalization near dlt} continued}. By Claim \ref{claim: first extraction corresponding to S}, for every prime $\bb$-divisor $F$ over $X$ such that $a(F,X,B)\leq 1$ and $c_X(F)\subsetneq\bar x$, either $F\in\{E_1,\dots,E_m\}$, or $c_W(F)\cap V_W=\emptyset$. In particular, $c_W(F)\not\subset\lfloor B_W\rfloor$. Thus
$$a(F,W,\{B_W\})=a(F,W,B_W)=a(F,X,B)\leq 1.$$
Since $(W,B_W)$ is $\Qq$-factorial dlt, $(W,\{B_W\})$ is klt. By Lemma \ref{lem: klt finite valuation less 1}, there is a finite set of prime $\bb$-divisors $F$ over $X$ such that $a(F,W,\{B_W\})\leq 1$. In particular, there is a finite set $\mathcal{D}'$ of prime $\bb$-divisors $F$ over $X$ such that 
\begin{itemize}
    \item $a(F,X,B)\leq 1$,
    \item $F$ is exceptional over $W$, and
    \item  $c_X(F)\subsetneq\bar x$.
\end{itemize}
Thus $\mathcal{D}=\mathcal{D}'\cup\{E_1,\dots,E_m\}$ is a finite set, which implies (1). (2) follows from (1) and \cite[Corollary 1.4.3]{BCHM10}.

For any birational morphism $f: Y\rightarrow X$ which extracts exactly all the prime $\bb$-divisors belonging to $\mathcal{D}$ such that $K_Y+B_Y=f^*(K_X+B)$, since $(X,B)$ is dlt, (3.a) and (3.b) are immediate. (3.c) follows from the construction of $\mathcal{D}$.
\end{proof}

\section{Complements} The existence of $n$-complements, which was introduced by Shokurov in \cite{Sho92} and proved by Birkar in \cite{Bir19}, plays a key role in the proof of our main theorem. We need a generalized version of $n$-complement in this paper, which is called the \emph{$(n,\Ii)$-complement}.

\begin{defn}[Complements]\label{defn: complement}
	Let $X\to Z$ be a contraction, $B$ an effective $\mathbb R$-divisor on $X$, and $z\in Z$ a point. We say that $(X/Z\ni z,B^+)$ is an \emph{$\Rr$-complement} of $(X/Z\ni z,B)$ if $B^{+}\ge B$, $(X,B^{+})$ is lc and $K_X+B^{+}\sim_{\mathbb R,Z}0$ over an open neighborhood of $z$. 
	
	Let $n>0$ be an integer. An \emph{$n$-complement} of $(X/Z\ni z,B)$ is a pair $(X/Z\ni z,B^+)$, such that over an open neighborhood of $z$,
	\begin{itemize}
		\item $(X,B^+)$ is lc,
		\item  $n(K_X+B^+)\sim_Z 0$, and
		\item $B^+\geq \lfloor B\rfloor+\frac{1}{n}\lfloor (n+1)\{B\}\rfloor$.
	\end{itemize}
	We say that $(X/Z\ni z,B^+)$ is a \emph{monotonic $n$-complement} of $(X/Z\ni z,B)$ if we additionally have $B^+\geq B$.

     If $Z=X$, we may omit $Z$ and say that $(X\ni z,B^+)$ is an $\Rr$-complement (resp. $n$-complement, monotonic $n$-complement) of $(X\ni z,B)$, and in this case, we also say that $(X,B^+)$ is a local $\Rr$-complement (resp. local $n$-complement, monotonic local $n$-complement) of $(X,B)$ near $z$.
\end{defn}

\begin{defn}\label{defn: nI1I2complmenet}
	Let $n>0$ be a integer, $\Ii\subset[0,1]$ a set,  $X\to Z$ a contraction, $B$ an effective $\Rr$-divisor on $X$, and $z\in Z$ a point. An \emph{$(n,\Ii)$-complement} of $(X/Z\ni z,B)$ is an $\Rr$-complement $(X/Z\ni z,B^+)$ of $(X/Z\ni z,B)$ such that $B^+=\sum_i a_iB_i$ satisfying the following.
	\begin{itemize}
	\item Each $a_i\in\Ii$ and $\sum_i a_i=1$, and
	\item each $(X/Z\ni z,B_i)$ is a monotonic $n$-complement of itself.
	\end{itemize}
\end{defn}

\begin{comment}
\begin{thm}[{\cite[Theorem 1.6]{Nak16}}]\label{thm: nak approximation}
Let $d,c,m>0$ be three integers, $r_1,\ldots,r_{c}>0$ real numbers such that $1,r_1,\dots,r_c$ are $\Qq$-linearly independent, and $s_1,\ldots, s_m: \Rr^{c+1}\rightarrow\Rr$ $\Qq$-linear functions. Then there exists $\epsilon>0$ depending  only on $d$, $r_1,\ldots, r_{c}$ and $s_1,\ldots, s_m$ satisfying the following. For any $\Qq$-Gorenstein normal variety $X$ of dimension $d$ and distinct $\Qq$-Cartier Weil divisors $D_1,\ldots,D_m\ge0$ on $X$, if 
	$$(X,\sum_{i=1}^ms_i(1,r_1,\ldots,r_{c})D_i)$$ is lc, then
	$$(X,\sum_{i=1}^ms_i(1,r_1,\ldots,r_{c-1},t)D_i)$$
	is lc for any $t$ such that $|t-r_{c}|\leq\epsilon$.
\end{thm}
\end{comment}

The following lemma shows the existence of local $(n,\Ii)$-complements.

\begin{lem}\label{lem: local ni1i2}
Let $d>0$ be an integer and $\Ii_0\subset [0,1]$ a finite set. Then there is an integer $n>0$ and two finite sets $\Ii\subset [0,1]$ and $\Ii'\subset [0,1]\cap\Qq$ depending only on $d$ and $\Ii_0$ satisfying the following. Assume that
\begin{itemize}
    \item $(X\ni x,B)$ is a $\mathbb Q$-factorial lc singularity of dimension $d$, and
    \item $B\in\Ii_0$,
\end{itemize}
then there is an integer $m>0$, real numbers $a_1,\dots,a_m\in (0,1]$, effective $\Qq$-divisors $B_1,\dots,B_m$ and an effective $\Rr$-divisor $G$ on $X$, such that
\begin{enumerate}
\item $a_i\in\Ii$ and $B_i\in\Ii'$ for every $i$,
	\item $\sum_{i=1}^{m} a_i=1$,
	\item $B=\sum_{i=1}^ma_iB_i$, 
	\item $(X\ni x,B_i)$ is lc for every $i$, and
	\item $(X\ni x,B+G)$ is an $(n,\Ii)$-complement of $(X\ni x,B)$.
\end{enumerate}
\end{lem}

\begin{proof} Let $c:=\dim_{\Qq}\Span_{\Qq}(\Ii_0)-1\geq 0$ be an integer and $r_1,\dots,r_c>0$ irrational numbers depending only on $\Ii_0$, such that $1,r_1,\dots,r_c$ are $\Qq$-linearly independent and $\Span_{\Qq}(\Ii_0)=\Span_{\Qq}(1,r_1,\dots,r_c)$. 

We find $m$, $\Ii$ and $\Ii'$ by induction on $c$. When $c=0$, we may let $m=1$, $\Ii:=\{1\}$ and $\Ii':=\Ii_0$.

Suppose that $c\geq 1$. Then there is an integer $n>0$ and $\Qq$-linear functions $s_1,\dots,s_n:\mathbb R^{c+1}\rightarrow\mathbb R$ depending only on $\Ii_0$, such that for every $(X,B)$ as in the assumptions, there are distinct effective Weil divisors $B^1,\dots,B^n$ on $X$ such that $B=\sum_{j=1}^ns_j(1,r_1,\dots,r_c)B^j$. 

By \cite[Theorem 1.6]{Nak16}, we may pick $\epsilon>0$ and $\delta>0$ depending only on $d$ and $\Ii$, such that $r_c+\epsilon$ and $r_c-\delta$ are both rational numbers, and 
$$(X,\sum_{j=1}^ns_j(1,r_1,\dots,r_{c-1},r_c+\epsilon)B^j), (X,\sum_{j=1}^ns_j(1,r_1,\dots,r_{c-1},r_c-\delta)B^j)$$
are both lc. Since
\begin{align*}
K_X+B=&\frac{\delta}{\epsilon+\delta}(K_X+\sum_{j=1}^ns_j(1,r_1,\dots,r_{c-1},r_c+\epsilon)B^j)\\
+&\frac{\epsilon}{\epsilon+\delta}(K_X+\sum_{j=1}^ns_j(1,r_1,\dots,r_{c-1},r_c-\delta)B^j).
\end{align*}
and $s_j(1,r_1,\dots,r_{c-1},r_c+\epsilon)$ and $s_j(1,r_1,\dots,r_{c-1},r_c-\delta)$ 
belong to a finite set of positive real numbers $\Ii_0'$ depending only on $d$ and $\Ii_0$ such that $\dim_{\mathbb Q}\Span_{\mathbb Q}(\Ii_0')\leq c$, by induction on $c$, we get $\Ii$, $\Ii'$, $m$, $a_1,\dots,a_m$ and $B_1,\dots,B_m$ which satisfy (1)-(4).

We only left to find $n$ and $G$ which satisfy (5). By \cite[Theorem 1.7]{Bir19}, there exists an integer $n>0$ depending only on $d$ and $\Ii'$ and effective $\mathbb Q$-divisors $G_1,\dots,G_m$ on $X$, such that
\begin{itemize}
\item each $(X\ni x,B_i+G_i)$ is lc, and
\item each $n(K_X+B_i+G_i)$ is Cartier near $x$.
\end{itemize}
Thus $n$ and $G:=\sum_{i=1}^ma_iG_i$ satisfy our requirements.\end{proof}

\section{Log discrepancies of non-canonical singularities}
In this section we study the structure of log discrepancies of non-canonical singularities, and prove several technical results that are important in the proof of Theorem \ref{thm: alct mld equivalence strict}. 

\subsection{Reduce lc singularities to klt singularities}
It is possible that the minimal log discrepancy \textbf{at} a point is not equivalent to the minimal log discrepancy \textbf{near} a point. 
\begin{ex}
\begin{enumerate}
    \item Let $X:=\mathbb P^2$, $H$ a curve of degree $1$ on $X$ and $x\in H$ a closed point. Then $\mld(x,X,H)=1$ but the the total log discrepancy of $(X,H)$ near $x$ is $0$.
    \item Let $X:=\mathbb P^3$, $B:=H_1+H_2+H_3$ where $H_1,H_2,H_3$ are three hyperplanes of degree $1$ intersecting at a closed point $x$, and $l$ a line such that $x\in l$ and $l\not\subset H_1\cup H_2\cup H_3$. Then $\mld(x,X,B)=0$ but $\mld(\eta_l,X,B)=2$  where $\eta_l$ is the generic point of $l$.
    \item Let $X:=\mathbb P^3$, $B:=\frac{2}{3}(H_1+H_2+H_3)$ where $H_1,H_2,H_3$ are three hyperplanes of degree $1$ intersecting at a line $l$, and $x\in l$ a closed point. Then $\mld(x,X,B)=1$, but $\mld(\eta_l,X,B)=0$ where $\eta_l$ is the generic point of $l$.
\end{enumerate}
\end{ex}

The next lemma shows a connection between the mlds of lc singularities and the mlds of klt singularities.
\begin{lem}\label{lem: lc mld to klt mld}
Let $d>0$ be an integer, $\Ii\subset [0,1]$ a set of real numbers, and $(X\ni x,B)$ an lc singularity of dimension $d$, such that
\begin{itemize}
    \item $B\in\Ii$, and
    \item $0<\mld(x,X,B)\leq 1$,
\end{itemize} 
then 
\begin{enumerate}
\item either there is a $\Qq$-factorial klt singularity $(X'\ni x',B')$ of dimension $d$, such that $B'\in\Ii\cup\{1\}$ and $\mld(x',X',B')=\mld(x,X,B)$, or
\item there is a dlt singularity $(S\ni s,B_S)$ of dimension $d-1$ such that $\mld(s,S,B_S)=\mld(x,X,B)$ and $B_S\in D(\Ii)$.
\end{enumerate}
\end{lem}

\begin{proof}
We may assume that $\codim(x,X)\geq 2$. Let $f: Y\rightarrow X$ be a dlt modification near $x$ and $K_{Y}+B_Y:=f^*(K_{X}+B)$. Suppose that $E$ is a prime $\bb$-divisor over $X$ such that $\mld(x,X,B)$ is attained at $E$. Since $a(E,X,B)>0$, $E$ is exceptional over $Y$. We let $y$ be the generic point of $c_Y(E)$, then 
$$\mld(x,X,B)\leq\mld(y,Y,B_Y)\leq a(E,Y,B_Y)=a(E,X,B)=\mld(x,X,B).$$

Possibly replacing $\Ii$ with $\Ii\cup\{1\}$ and $(X\ni x,B)$ with $(Y\ni y,B_Y)$, we may assume that $(X\ni x,B)$ is $\Qq$-factorial dlt. Possibly removing all the irreducible components of $B$ which do not contain $x$, we may suppose that all the irreducible components of $B$ contain $x$. 

If $(X\ni x,B)$ is klt, we get (1). Therefore we may assume that $(X\ni x,B)$ is not klt. Thus $x\in\lfloor B\rfloor$. Assume that $S_1,\dots,S_n$ are all the irreducible components of $\lfloor B\rfloor$ and $V:=\cap_{i=1}^nS_i$. Since $\mld(x,X,B)>0$, $\bar x\not=V$. Therefore $\codim(x,V)\geq 1$. By Lemma \ref{lem: terminalization near dlt}, there exists an extraction $g: X'\rightarrow X$ which extracts exactly all the prime $\bb$-divisors $F$ such that $c_X(F)\subsetneq\bar x$ and $a(F,X,B)\leq 1$. Since $(X\ni x, B)$ is $\Qq$-factorial dlt, $g$ is an isomorphism near the generic point of $\bar x$. 

Let $x'$ and $B'$ be the strict transforms of $x$ and $B$ on $X'$ respectively, and suppose that 
$$K_{X'}+B'+F':=g^*(K_X+B)$$
where $F'$ is exceptional over $X$. Then $\bar x'\not\subset\Supp F'$. Therefore, for any prime $\bb$-divisor $F$ that is exceptional over $X'$ such that $c_{X'}(F)\subsetneq\bar x'$, 
$$a(F,X',B')\geq a(F,X',B'+F')=a(F,X,B)>1.$$
Let $S$ be the strict transform of $S_1$ on $X'$ and let-
$$K_{S}+B_S:=(K_{X'}+B')|_{S}.$$
By adjunction, $B_{S}\in D(\Ii)$. By Lemma \ref{lem: inv of adj strong},
\begin{align*}
    1&\geq\mld(x,X,B)=a(E,X,B)=\mld(x',X',B'+F')=\mld(x',X',B')\\
    &=\mld(\subset x',X',B')=\mld(\subset x',S,B_S)=\mld(s,S,B_S)
\end{align*}
for some $s\in\bar x'$. Thus $(S\ni s,B_S)$ satisfies (2).
\end{proof}

\subsection{Multiplicities of exceptional divisors with small log discrepancies}

In this subsection, we prove several results on the multiplicities of exceptional divisors with log discrepancies between $0$ and $1$.

\begin{lem}\label{lem: divisor innermultiplicity less than 1}\rm
Assume that 
\begin{itemize}
    \item $n\geq 2$ is an integer,
    \item $(X,B)$ is a dlt pair,
    \item $\alpha_1,\dots,\alpha_n>0$ are real numbers,
    \item $E_1,\dots,E_n$ are exceptional prime $\bb$-divisors over $X$ such that $0<a(E_j,X,B)<1$ for every $1\leq j\leq n$, and
    \item $h: Z\rightarrow X$ is an extraction of $E_1,\dots,E_n$.
\end{itemize}
Then there exists an integer $1\leq i\leq n$ and two birational morphisms $f: Y\rightarrow X$ and $g: W\rightarrow Y$ satisfying the following.
\begin{enumerate}
\item $f$ is an extraction which exactly extracts $E_1,\dots, E_{i-1},E_{i+1},\dots, E_n$, 
\item $g$ is the extraction of $E_i$, and
\item $\mult_{E_i}\sum_{j\not=i}\alpha_jE_{j,Y}<\alpha_i$, where $E_{j,Y}$ is the center of $E_j$ on $Y$ for every $j\not=i$.
\end{enumerate}
\end{lem}

\begin{proof} Let $K_Z+B_Z:=f^*(K_X+B)$. Since $0<a(E_j,X,B)<1$ for every $1\leq j\leq n$, there is a real number $0<\epsilon\ll 1$ such that $(Z,B_Z+\epsilon\sum_{j=1}^n\alpha_jE_j)$ is $\Qq$-factorial dlt. By \cite[Theorem 1.8]{Bir12}, we may run a $(K_Z+B_Z+\epsilon\sum_{j=1}^n\alpha_jE_j)$-MMP$/X$ with scaling. Assume that
\begin{itemize}
    \item $Z\dashrightarrow W$ is the first sequence of flips in this MMP. 
    \item $g: W\rightarrow Y$ is the first divisorial contraction of this MMP,
    \item $i$ is the index such that $g$ contracts $E_i$, and
    \item $f: Y\rightarrow X$ is the induced morphism.
\end{itemize}
We show that $i,f$ and $g$ as above satisfy our requirements. (1)(2) are immediate. To clarify our following statements, we let $E_{j,W}$ be the center of $E_j$ on $W$ for every $1\leq j\leq n$. Since a $(K_Z+B_Z+\epsilon\sum_{j=1}^n\alpha_jE_j)$-MMP$/X$ is also a $(\sum_{j=1}^n\alpha_jE_j)$-MMP$/X$, $g$ is $(\sum_{j=1}^n\alpha_jE_{j,W})$-negative, which implies (3).
\end{proof}

\begin{lem}\label{lem: extremal alc}
Let  $d>0$ be an integer, $\Ii\subset [0,1]$ a set of real numbers, $0<a'<a<1$ two real numbers, and $(X\ni x,B)$ a $\Qq$-factorial klt singularity of dimension $d$, such that
\begin{itemize}
    \item $B\in\Ii$, and
    \item $\mld(x,X,B)=a'$.
\end{itemize}
Then there exists a $\Qq$-factorial klt singularity $(Y\ni y,B_Y)$ of dimension $d$ and a prime $\bb$-divisor $E$ over $Y\ni y$ satisfying the following.
\begin{enumerate}
    \item $B_Y\in\Ii\cup\{1-a\}$,
    \item $a'\leq a(F,Y,B_Y)<a$, and
    \item for any prime $\bb$-divisor $F\not=E$ over $Y\ni y$, $a(F,Y,B_Y)\geq a$.
\end{enumerate}
\end{lem}
\begin{proof}
By Lemma \ref{lem: klt finite valuation less 1}, there are finitely many prime $\bb$-divisors $E_1,\dots,E_n$ over $X\ni x$ such that $a'\leq a_i:=a(E_i,X,B)<a$ for each $i$. If $n=1$, let $(Y\in y,B_Y):=(X\ni x,B)$ and $E:=E_1$ then we are done. Otherwise, By \cite[Corollary 1.4.3]{BCHM10} and Lemma \ref{lem: divisor innermultiplicity less than 1}, possibly reordering the indices of $E_i$, there exists an extraction $g: Y\rightarrow X$ of $E_1,\dots,E_{n-1}$, such that
$$K_Y+B_Y+\sum_{i=1}^{n-1}(a-a_i)E_i=g^*(K_X+B)$$
where $B_Y:=g^{-1}_*B+\sum_{i=1}^{n-1}(1-a)E_i$, and
$$\mult_{E_n}\sum_{i=1}^{n-1}(a-a_i)E_i<a-a_n.$$
Let $E:=E_n$ and $y$ be the generic point of $c_Y(E)$. Since
\begin{align*}
a'\leq a_n&=a(E,Y,B_Y+\sum_{i=1}^{n-1}(a-a_i)E_i)\\
&\leq a(E,Y,B_Y+\sum_{i=1}^{n-1}(a-a_i)E_i)+\mult_{E}\sum_{i=1}^{n-1}(a-a_i)E_i\\
&<a(E,X,B)+(a-a_n)\leq a
\end{align*}
and $$a(E,Y,B_Y)=a(E,Y,B_Y+\sum_{i=1}^{n-1}(a-a_i)E_i)+\mult_{E}\sum_{i=1}^{n-1}(a-a_i)E_i,$$
$(Y\ni y,B_Y)$ and $E$ satisfy our requirements.
\end{proof}

\begin{lem}\label{multiplicity compare}
Assume that 
\begin{itemize}
    \item $(X,B)$ is a klt pair,
    \item $E_1,\dots,E_m$ are exceptional prime $\bb$-divisors over $X$ such that $0<a(E_i,X,B)<1$ for each $i$,
    \item $g: W\rightarrow X$ is an extraction of $E_1,\dots,E_m$, and
    \item $f_i:X_i\rightarrow X$ is the extraction of $E_i$ for every $i$.
\end{itemize} 
To clarify our following statements, we let $E_{i,X_i}$ and $E_{i,W}$ be the centers of $E_i$ on $X_i$ and $W$ respectively. Then for every prime $\bb$-divisor $F$ over $W$,
$$\mult_FE_{i,W}\leq\mult_{F}E_{i,X_i}$$
for every $1\leq i\leq m$.
\end{lem}
\begin{proof}
Let $K_W+B_W:=g^*(K_X+B)$. For every $1\leq i\leq m$, since $a(E_i,X,B)<1$, we may run a $(K_W+B_W-\epsilon E_i)$-MMP over $X$ for some $0<\epsilon\ll 1$. By the uniqueness of the log canonical model, this MMP induces a $(K_W+B_W-\epsilon E_i)$-negative map $h_i: W\dashrightarrow X_i$ over $X$. Thus $h_i$ is $E_i$-positive, and the lemma follows.
\end{proof}

\subsection{A technical lemma}

In this subsection we prove a technical lemma (Lemma \ref{lem: common gap lemma}) for the proof of Theorem \ref{thm: alct mld equivalence strict}. We need the following well-known result on boundedness of number of components:

\begin{thm}[{\cite[Theorem 18.22]{Kol92}}]\label{thm: number of coefficients local}
Let $(X\ni x,\sum b_iB_i)$ be an lc singularity such that $K_X$ and $B_i$ are $\Qq$-Cartier near $x$ and $x\in\cap\Supp B_i$. Then $\sum b_i\le \dim X$.
\end{thm}

\begin{lem}\label{lem: common gap lemma}
Let $d>0$ be an integer and $0<a<a'<1$ two real numbers. Then there is a real number $a<c=c(d,a,a')<a'$ depending only on $d,a$ and $a'$ satisfying the following. Assume that 
\begin{itemize}
    \item $(X\ni x,B)$ is a klt singularity of dimension $d$,
    \item $E$ is a prime $\bb$-divisor over $X\ni x$,
    \item $a(E,X,B)=\mld(x,X,B)<a$, 
    \item $\mathcal{C}$ is the set of all the prime $\bb$-divisors $F\not=E$ over $X\ni x$, such that $a(F,X,B)\leq a'$,
    \item for every $F\in\mathcal{C}$, 
    \begin{itemize}
        \item $f_F:X_F\rightarrow X$ is the extraction of $F$,
        \item $B_{X_F}$ is the strict transform of $B$ on $X_F$, and
        \item $a(E,X_F,B_{X_F}+(1-a')F)\leq a$,
    \end{itemize}
    \item $m>0$ is an integer, $F_1,F_2,\dots F_m\in\mathcal{C}$, such that $a\leq a_i:=a(F_i,X,B)\leq c$ for every $1\leq i\leq m$, and
    \item $g: W\rightarrow X$ is an extraction of $F_1,\dots,F_m$ and $B_W$ is the strict transform of $B$ on $W$,
\end{itemize}
Then $a(E,W,B_W+\sum_{i=1}^m(1-c)F_i)<a$.
\end{lem}

\begin{proof}
We show that $c:=a+\frac{(a'-a)(1-a')}{3d}$ satisfies our requirements. Let $w$ be the generic point of $c_W(E)$. To clarify our following statements, we let $F_{i,W}$ and $F_i'$ be the centers of $F_i$ on $W$ and $X_{F_i}$ respectively. Since $a_i\leq a'$ for every $i$,
$(W,B_W+\sum_{i=1}^m(1-a')F_{i,W})$ is lc near $w$. By Theorem \ref{thm: number of coefficients local}, there are at most $\frac{d}{1-a'}$ different indices $i$ such that $w\in F_{i,W}$. Possibly reordering indices, we may assume that there exists an integer $0\leq n\leq\min\{m,\frac{d}{1-a'}\}$ such that $F_1,\dots,F_n$ are all the $F_i$ such that $w\in F_{i,W}$. In particular,
\begin{align*}
    &a(E,W,B_W+\sum_{i=1}^m(1-c)F_{i,W})\\
    =&a(E,W,B_W+\sum_{i=1}^m(1-a_i)F_{i,W})+\sum_{i=1}^m(c-a_i)\mult_EF_{i,W}\\
    =&a(E,X,B)+\sum_{i=1}^m(c-a_i)\mult_EF_{i,W}\leq a(E,X,B)+\sum_{i=1}^m(c-a)\mult_EF_{i,W}\\
    =&a(E,X,B)+\sum_{i=1}^n(c-a)\mult_EF_{i,W}.\\
\end{align*}
By Lemma \ref{multiplicity compare},
$$\mult_EF_{i,W}\leq\mult_EF_i'$$
for every $1\leq i\leq m$. Since $a(E,X_{F_i},B_{X_{F_i}}+(1-a')F_i')\leq a$ and $a(E,X_{F_i},B_{X_{F_i}}+(1-a_i)F_i')=a(E,X,B)$, 
$$\mult_EF_i'\leq\frac{a-a(E,X,B)}{a'-a_i}\leq\frac{a-a(E,X,B)}{a'-c}$$
for every $1\leq i\leq m$. Thus
$$a(E,X,B)+\sum_{i=1}^n(c-a)\mult_EF_{i,W}\leq a(E,X,B)+n\frac{(c-a)}{a'-c}(a-a(E,X,B)).$$
Since $n\leq\frac{d}{1-a'}$, we deduce that $a(E,X,B)+n\frac{(c-a)}{a'-c}(a-a(E,X,B))<a$. The lemma follows from the inequalities above.
\end{proof}

\section{Proof of the main theorems}

In this section we state several results when assuming the ACC for $a$-lc thresholds or the ACC for mlds. By applying these results, we may reduce Theorem \ref{thm: alct mld equivalence strict} to Theorem \ref{thm: alct implies mld finite coefficient}. By applying results in Section 5, we prove Theorem \ref{thm: alct implies mld finite coefficient}, and deduce all the main theorems.

\begin{lem}\label{lem: acc alct imply lower dimension}
Let $d>1$ be an integer, $a>0$ a real number, and $\Ii\subset [0,1]$ and $\Ii'\subset [0,+\infty)$ two sets. Assume that $\bm{L}(d,\Ii,\Ii',a)$ holds. Then $\bm{L}(d-1,\Ii,\Ii',a)$ holds.
\end{lem}

\begin{proof}
The lemma is immediate by noticing that for any triple $(X_i,B_i;G_i)$ and  point $x_i\in X_i$, $a$-$\lct_{x_i}(X_i,B_i;G_i)=a$-$\lct_{x_i\times\mathbb A^1}(X_i\times\mathbb A^1,B_i\times\mathbb A^1;G_i\times\mathbb A^1)$.
\end{proof}

\begin{prop}\label{prop: finite to dcc coefficient alct mld equivalence}
Let $d>0$ be an integer and $0<a<1$ a real number. Assume that $\bm{M}(d,\Ii_0,a)$ and $\bm{L}'(d,\Ii_0,\Ii_0',a)$ hold for any two finite sets $\Ii_0\subset [0,1]$ and $\Ii_0'\subset [0,+\infty)$. Then $\bm{M}(d,\Ii,a)$ holds for any DCC set $\Ii\subset [0,1]$.
\end{prop}

\begin{proof} Suppose that the proposition does not hold. Then there is
\begin{itemize}
    \item a DCC but not finite set $\Ii\subset [0,1]$,
    \item  a sequence of lc singularities $(X_i\ni x_i,B_i)$ of dimension $d$ such that $B_i\in\Ii$, and
    \item a strictly increasing sequence $a_i:=\mld(x_i,X_i,B_i)$ such that $a=\lim_{i\rightarrow+\infty}a_i$.
\end{itemize}

Possibly replacing $(X_i\ni x_i,B_i)$ with a dlt modification and replacing $\Ii$ with $\Ii\cup\{1\}$, we may assume that $(X_i\ni x_i,B_i)$ is $\mathbb Q$-factorial dlt. Possibly removing all the irreducible components of $B_i$ which do not contain $x_i$, we may assume that all the irreducible components of $B_i$ contain $x_i$. 

Let $\delta:=\min\{t\in\Ii|t>0\}$. Write $B_i=\sum_{j=1}^{m_i}b_{i,j}B_{i,j}$ into its irreducible components. By Theorem \ref{thm: number of coefficients local}, $\sum_{j=1}^m b_{i,j}\leq d$, hence $u_i\leq\frac{d}{\delta}$. Possibly passing to a subsequence, we may assume that 
\begin{itemize}
    \item $m_i=m>0$ is a constant, and
    \item for every $1\leq j\leq m$, $\{b_{i,j}\}_{i=1}^{+\infty}$ is an increasing sequence.
\end{itemize}
Let $b'_j:=\lim_{i\rightarrow+\infty} b_{i,j}$ for each $j$ and
$B'_i:=\sum_{j=1}^{m} b'_jB_{i,j}$
for each $i$. By Theorem \ref{thm: acc lct}, possibly passing to a subsequence, we may assume that $(X_i\ni x_i,B'_i)$ is lc for every $i$. Let $a'_i:=\mld(x_i,X_i,B'_i)$. Possibly passing to a subsequence, we may assume that $a_i'$ has a unique accumulation point $a'$. Since $B'_i\geq B_i$, 
$$0\leq a_i'=\mld(x_i,X_i,B'_i)\leq\mld(x_i,X_i,B_i)=a_i<1-t.$$
 By our assumptions, possibly passing to a subsequence, we may assume that $a'_i$ is decreasing. Thus $a'<a$. Moreover, there is a sequence $\{\alpha_k\}_{k=1}^{+\infty}\subset[0,a)$ of real numbers, such that $\lim_{k\rightarrow+\infty}\alpha_k=a$ and $\{\alpha_k$-$\lct_{x_i}(X_i,0;B_i')\}_{i=1}^{+\infty}$ satisfies the ACC for every fixed $k$.

Pick $k$ such that $a'<\alpha_k<a$. Possibly passing to a subsequence, we may assume that $a_i'<\alpha_k<a_i$ for every $i$. Since $b'_j=\lim_{i\rightarrow+\infty} b_{i,j}$ for each $j$, possibly passing to a subsequence, there exists a strictly increasing sequence of real numbers $\{\beta_i\}_{i=1}^{+\infty}$ such that $\lim_{i\rightarrow+\infty}\beta_i=1$ and 
$$\beta_iB'_i\leq B_i\leq B'_i$$
for every $i$. Thus
$$\beta_i\leq \alpha_k\text{-}\lct_{x_i}(X_i,0;B'_i)<1.$$
Possibly passing to a subsequence, we may assume that  $\alpha_k\text{-}\lct_{x_i}(X_i,0;B'_i)$ is strict increasing, a contradiction.
\end{proof}

\begin{lem}\label{lem: attain implies acc}
Let $d,n>0$ be two integers, $a>0$ a real number, and $\Ii,\Ii'\subset [0,1]$ two finite sets of real numbers. Assume that $\bm{L}(d,\Ii,\Ii'',a)$ holds for any finite set $\Ii''\subset [0,+\infty)$. Assume that
\begin{itemize}
    \item $(X\ni x,B)$ is an lc singularity of dimension $d$,
    \item $(X\ni x, B+G)$ is an $(n,\Ii')$-complement of $(X\ni x,B)$, 
    \item $E$ is a prime $\bb$-divisor over $X\ni x$, 
    \item $0<a$-$\lct_x(X,B;G)\leq 1,$ and
    \item $a$-$\lct_x(X,B;G)$ is attained at $E$, 
\end{itemize}
then $a(E,X,B)$ belongs to an ACC set depending only on $d,n,a,\Ii$ and $\Ii'$.
\end{lem}
\begin{proof}
Let $t:=a$-$\lct_x(X,B;G)$. Then $t$ belongs to an ACC set. Since $(X\ni x, B+G)$ is an $(n,\Ii')$-complement of $(X\ni x,B)$, $a(E,X,B+G)$ belongs to a discrete set. Since $a(E,X,B+G)\leq a(E,X,B+tG)=a$, $a(E,X,B+G)$ belongs to a finite set. Since 
$$a=a(E,X,B+tG)=ta(E,X,B+G)+(1-t)a(E,X,B),$$
we have
$$a(E,X,B)=\frac{a-ta(E,X,B+G)}{1-t}=a(E,X,B+G)+\frac{a-a(E,X,B+G)}{1-t}$$
which belongs to an ACC set.
\end{proof}

\begin{thm}\label{thm: alct implies mld finite coefficient}
Let $d>0$ be an integer and $0<a<1$  a real number. Assume that $\bm{L}'(d,\Ii_0,\Ii_0',a)$ holds for every finite sets $\Ii_0\subset [0,1]$ and $\Ii_0'\subset [0,+\infty)$. Then $\bm{M}(d,\Ii,a)$ holds for every finite set $\Ii\subset [0,1]$.
\end{thm}

\begin{proof}
Suppose that the theorem does not hold. Then there is
\begin{itemize}
    \item a finite set of real numbers $\Ii\subset [0,1]$,
    \item a sequence of lc singularities $(X_i\ni x_i,B_i)$ of dimension $d$ such that $B_i\in\Ii$, and
    \item  a strictly increasing sequence $a_i:=\mld(x_i,X_i,B_i)$ such that $a=\lim_{i\rightarrow+\infty}a_i$.
\end{itemize}

 By Lemma \ref{lem: lc mld to klt mld}, Theorem \ref{thm: alct implies mld finite coefficient} in dimension $d-1$ (when $d>1$), and Lemma \ref{lem: acc alct imply lower dimension}, possibly replacing $\Ii$ with $\Ii\cup\{1\}$ and applying induction on $d$, we may assume that each $(X_i\ni x_i,B_i)$ is $\Qq$-factorial klt.
 
 By Lemma \ref{lem: extremal alc}, possibly replacing $\Ii$ with $\Ii\cup\{1-a\}$, $\{a_i\}_{i=1}^{+\infty}$ with a sequence $\{b_i\}_{i=1}^{+\infty}$ such that $a_i\leq b_i<a$ for every $i$, and passing to a subsequence, we may assume that there is a unique prime $\bb$-divisor $E_i$ over $X_i\ni x_i$ such that $a(E_i,X_i,B_i)<a$. In particular, $a_i=a(E_i,X_i,B_i)$.
 
Since $\bm{L}(d,\Ii,\{0,1\},a)$ holds, let
$$a':=1-\max\{0,t|t<1-a, t\in a\text{-LCT}(d,\Ii,\{0,1\})\},$$
then $a'>a$. Let $c:=c(d,a,a')$ be the number as in Lemma \ref{lem: common gap lemma}. For each $i$, we let
\begin{itemize}
    \item $\mathcal{D}_i:=\{F_i|c_{X_i}(F_i)=\bar x_i, a\leq a(F_i,X_i,B_i)\leq c\}$,
    \item $g_i:W_i\rightarrow X_i$ be an extraction of all the prime $\bb$-divisors in $\mathcal{D}_i$, and
    \item $B_{W_i}$ the strict transform of $B_i$ on $W_i$.
\end{itemize} 
Possibly replacing $\Ii$ with $\Ii\cup\{1-c\}$, $(X_i,B_i)$ with $(W_i,B_{W_i}+(1-c)\sum_{F_i\in\mathcal{D}_i}F_i)$, $x_i$ with the generic point of $c_{W_i}(E_i)$, $\{a_i\}_{i=1}^{+\infty}$ with a sequence $\{b_i\}_{i=1}^{+\infty}$ such that $a_i\leq b_i<a$ for each $i$, and passing to a subsequence, we may assume that $E_i$ is the only prime $\bb$-divisor over $X_i\ni x_i$ such that $a(E_i,X_i,B_i)\leq c$.

By Lemma \ref{lem: local ni1i2}, there is an integer $n>0$ and a finite set $\Ii'\subset [0,1]$ depending only on $d$ and $\Ii$, such that for each $i$, there exists an $(n,\Ii')$-complement $(X_i\ni x_i,B_i+G_i)$ of $(X_i\ni x_i,B_i)$. Possibly passing to a subsequence, we may assume that $\delta:=a(E_i,X_i,B_i+G_i)\leq a_i<a$ is a constant depending only on $d$ and $\Ii$. Let $t:=\frac{c-a}{c}$ and $a'':=a-\frac{(c-a)(a-\delta)}{c}$. Then 
\begin{align*}
    a(E_i,X_i,B_i+tG_i)&=ta(E_i,X_i,B_i+G_i)+(1-t)a(E_i,X_i,B_i)\\
    &<t\delta+(1-t)a=a''
\end{align*}
and
\begin{align*}
    a(F_i,X_i,B_i+tG_i)&=ta(F_i,X_i,B_i+G_i)+(1-t)a(F_i,X_i,B_i)\\
    &>(1-t)c=a>a''
\end{align*}
for every prime $\bb$-divisor $F_i\not=E_i$ over $X_i\ni x_i$. 

By our assumptions, there exists a real number $a''\leq \alpha<a$ such that $\{\alpha$-$\lct_{x_i}(X_i,B_i;G_i)\}_{i=1}^{+\infty}$ is an ACC set. By the inequalities above, $\alpha$-$\lct_{x_i}(X_i,B_i;G_i)$ is attained at $E_i$ for every $i$. By Lemma \ref{lem: attain implies acc}, $\{a_i\}_{i=1}^{+\infty}$ satisfies the ACC, a contradiction.
\end{proof}

\begin{rem}\label{rem: must r divisor}
The essential reason, of why we need to apply the existence of $(n,\Ii)$-complement instead of Birkar's result on the existence of monotonic $n$-complement even if we only consider $\Qq$-pairs, is that $a$ \textbf{may be an irrational number}. In one step of the proof (which corresponds to Lemma \ref{lem: extremal alc}), we need to replace $\Ii$ with $\Ii\cup\{1-a\}$. To explain this more precisely, the reason why we need to replace $\Ii$ with $\Ii\cup\{1-a\}$ is that we require a unique prime $\bb$-divisor over $X$ with log discrepancy $\leq a$ to control the multiplicities as in Lemma \ref{lem: common gap lemma}.

Without showing the existence of $(n,\Ii)$-complement, we cannot deal the case of a sequence of $\Qq$-pairs with a strictly increasing sequence of mlds which converges to an irrational number. In other words, Theorem \ref{thm: alct mld equivalence 01} has established an ``$\Rr$-pair correspondence" between the pairs violating the ACC conjecture for mlds and the pairs violating the ACC conjecture for $a$-lc thresholds, but a ``$\Qq$-pair correspondence" cannot be established in a similar way.
\end{rem}

\begin{proof}[Proof of Theorem \ref{thm: alct mld equivalence strict}]  It follows from Proposition \ref{prop: finite to dcc coefficient alct mld equivalence} and Theorem \ref{thm: alct implies mld finite coefficient}. \end{proof}

\begin{proof}[Proof of Theorem \ref{thm: mld implies alct}] It follows from the same lines of \cite[Proposition 2.1]{BS10} and \cite[Proposition 2.5]{BS10}. For readers' convenience, we give a full proof here.

Suppose that the theorem does not hold. Then there are two DCC sets $\Ii'\subset [0,1]$ and $\Ii''\subset [0,+\infty)$ satisfying the following. For every integer $i>0$, there exists
\begin{itemize}
    \item an lc singularity $(X_i\ni x_i,B_i)$ of dimension $d$ such that $B_i\in\Ii'$,
    \item an $\Rr$-Cartier $\Rr$-divisor $D_i\in\Ii''$ on $X_i$, 
    \item a strictly increasing sequence $t_i:=a$-$\lct_{x_i}(X_i,B_i;D_i)$, and
    \item a real number $t:=\lim_{i\rightarrow+\infty}t_i$.
\end{itemize}
By Theorem \ref{thm: acc lct}, possibly passing to a subsequence, we may assume that $a>0$ and $(X_i\ni x_i, B_i+tD_i)$ is lc for every $i$. We define
\begin{itemize}
    \item $a_i:=\mld(x_i,X_i,B_i)$ and
    \item $t_i':=t_i+\epsilon_i(t-t_i)$.
\end{itemize}
Possibly passing to a subsequence, we may assume that $t_i'$ is strictly increasing. Therefore, the set of the coefficients of $B_i+t_i'D_i$ satisfies the DCC. By the convexity of mlds, 
	\begin{align*}
	a&> \mld(x_i,X_i,B_i+t_i'D_i)\\
	&=\mld(x_i,X_i,\frac{t_i'-t_i}{t-t_i}(B_i+tD_i)+\frac{t-t_i'}{t-t_i}(B_i+t_iD_i))\\
	&\geq\frac{t_i'-t_i}{t-t_i}\mld(x_i,X_i,B_i+tD_i)+\frac{t-t_i'}{t-t_i}\mld(x_i,X_i,B_i+t_iD_i)\\
	&=\frac{t_i'-t_i}{t-t_i}a_i+\frac{t-t_i'}{t-t_i}a=a-\frac{(t_i'-t_i)(a-a_i)}{t-t_i}\\
	&=a-\epsilon_i(a-a_i)\geq (1-\epsilon_i)a.
	\end{align*}
	Possibly passing to a subsequence, we may assume that $\mld(x_i,X_i,B_i+t_i'D_i)$ is a strictly increasing sequence which converges to $a$, a contradiction.
\end{proof}

\begin{proof}[Proof of Theorem \ref{thm: alct mld equivalence 01}] The theorem follows from Theorem \ref{thm: alct mld equivalence strict} and Theorem \ref{thm: mld implies alct}.
\end{proof}

\begin{proof}[Proof of Corollary \ref{cor: alct total mld equivalence}] Since $\tmld(X,B)\leq 1$, the corollary follows from Theorem \ref{thm: alct mld equivalence 01}.\end{proof}

\end{document}